\newtheorem{lemma}{Lemma}[section]
\newtheorem{theorem}[lemma]{Theorem}
\newtheorem{proposition}[lemma]{Proposition}
\newtheorem{corollary}[lemma]{Corollary}
\newtheorem{remark}[lemma]{Remark}
\def\A{\mathcal A}
\def\B{\mathcal B}
\def\E{\mathcal E}
\def\M{\mathrm M}
\def\U{\mathfrak U}
\def\RR{\mathrm{RR}}
\def\p{\mathcal P}
\def\diag{\mathrm{diag}}
\def\csr{\mathrm{csr}}
\def\gsr{\mathrm{gsr}}
\begin{document}
\title[Nonstable $K$--theory for extension algebras]{Nonstable $K$--theory for extension algebras
of the simple purely infinite $C^*$--algebra by
certain~$C^{*}$--algebras}
\author[Zhihua Li and Yifeng Xue]{Zhihua Li $^*$ and Yifeng Xue $^{**}$}
\thanks{Department of Mathematics and Computer Science, Yichun University, Yichun,
Jiangxi, 336000}
\thanks{Department of Mathematics, East China Normal University,
Shanghai 200241. email: yfxue@math.ecnu.edu.cn}
\thanks{Project supported by Natural Science Foundation of China (no.10771069) and
 Shanghai
\indent\ \ Leading Academic Discipline Project(no.B407)}

\baselineskip 17.3pt
\begin{abstract}
Let $0\longrightarrow
\B\stackrel{j}{\longrightarrow}E\stackrel{\pi}{\longrightarrow}\A\longrightarrow
0$ be an extension of $\A$ by $\B$, where $\A$~is a unital simple purely infinite
$C^{*}$--algebra. When $\B$ is a simple separable essential ideal of the unital $C^{*}$--algebra
$E$ with $\RR(\B)=0$ and {\rm(PC)}, $K_{0}(E)=\{[p]\mid p$ is a projection in~$E\setminus B\}$;
When $B$ is a stable $C^{*}$--algebra, $\U(C(X,E))/\U_0(C(X,E))\cong K_1(C(X,E))$ for any compact
Hausdorff space $X$.
\vspace{1mm}

\noindent {\bf Keywords}\ $K$-groups; simple purely infinite $C^*$--algebra; real rank zero.

\noindent{\bf 2000 MR Subject Classification}\ 46L05.
\end{abstract}

\maketitle

\setcounter{section}{0}
\section{Introduction}

Let $\E$ be a $C^*$--algebra. Denote by $\M_n(\E)$ the $C^*$--algebra of all $n\times n$ matrices
over $\E$. If $\E$ is unital, write $\U(\E)$ to denote the unitary group of $\E$ and $\U_0(\E)$
to denote the connected component of the unit in $\U(\E)$. Put $U(\E)=\U(\E)/\U_0(\E)$. If
$\E$ has no unit, we set $U(\E)=\U(\E^+)/\U_0(\E^+)$, where $\E^+$ is the $C^*$--algebra obtained
by adding a unit to $\E$. Two projections $p,\ q$ in $\E$ are equivalent, denoted $p\sim q$,
if $p=v^*v, q=vv^*$ for some $v\in\E$. Let $[p]$ denote the equivalence of $p$ with respect to
``$\sim$". Let $p,\ r$ be projections in $\E$. $[p]\le [r]$ (resp. $[p]<[r]$) means that there is
projection $q\le r$ (resp. $q<r$) such that $p\sim q$. A projection $p$ in $\E$ is called to be
infinite, if $[p]<[p]$. The simple $C^*$--algebra $\E$ is called to be purely infinite if every nonzero
hereditary subalgebra of $\E$ contains an infinite projection.

Let $K_0(\E)$ and $K_1(\E)$ be the $K$--groups of the $C^*$--algebra
$\E$ and let $i_\E\colon U(\E)\rightarrow K_1(\E)$ be the canonical homomorphism (cf. \cite{Bl}).

The main tasks in non--stable $K$--theory are how to use the projection in $\E$ to represent
$K_0(\E)$ and how to show $i_\E$ is isomorphic. Cuntz showed in \cite{Cu} that $K_0(\E)\cong
\{[p]\vert\,p\in\E\ \text{nonzero projection}\}$ and $i_\E$ is isomorphic,
when $\E$ is a simple unital purely infinite $C^*$--algebra. Rieffel and Xue proved that
under some restrictions of stable rank on the $C^*$--algebra $\E$, $i_\E$ may be injective,
surjective or isomorphic (cf. \cite{R1,R2}, \cite{Xue}).

Let $\B$ be a closed ideal of a unital $C^*$--algebra $E$. Let $\pi\colon E\rightarrow
E/\B=\A$ be the quotient map. We will use these symbols $E$, $\B$, $\A$ and $\pi$ throughout
the paper. Liu and Fang proved in \cite{LF} that
\begin{enumerate}
\item $K_0(E)=\{[p]\vert\,p\ \text{is a projection in}\ E\backslash\B\}$ and
\item $i_E\colon U(E)\rightarrow K_1(E)$ is isomorphic.
\end{enumerate}
when $\B=\mathcal{K}$ (the algebra of compact operators on some separable
Hilbert space) and $\A$ is a unital simple purely infinite $C^*$--algebra. Visinescu showed in
\cite{V} that the above results are  also true when $\B$ is purely infinite.

In this short note, we show that (1) is true when $\B$ is a separable simple $C^*$--algebra
with $\RR(\B)=0$ and (PC) (see \S 2 below) and $\A$ is unital simple purely infinite; We also prove
that $i_{C(X,E)}$ is isomorphic for any compact Hausdorff space $X$ when $\B$ is stable and
$\A$ is unital simple purely infinite.

\section{$K_{0}$--group of the extension algebra}

Let $\E$ be a $C^*$--algebra. $\E$ is of real rank zero, denoted by $\RR(\E)=0$, if every
self--adjoint element in $\E$ can be approximated by an self--adjoint element in $\E$ with
finite spectra (cf. \cite{BP}). A non--unital, $\sigma$--unital $C^*$--algebra $\E$ with $\RR(\E)
=0$ is said to have property (PC) if it $\E$ has finitely many (densely defined) traces, say
$\{\tau_1,\cdots,\tau_k\}$ such that following conditions are satisfied:
\begin{enumerate}
\item there is an approximate unit $\{e_n\}$ of $\E$ consisting of projections such that
$\lim\limits_{n\to\infty}\tau_i(e_n)=\infty$, $i=1,\cdots,k;$
\item for two projections $p,\,q\in\E$, if $\tau_i(p)<\tau_i(q)$, $i=1,\cdots,k$, then
$[p]\le[q]$.
\end{enumerate}

Obviously, stable simple AF--algebras with only finitely many extremal traces have (PC) and
$\A_\theta\otimes\mathcal{K}$ also has (PC), where $\A_\theta$ is the irrational rotation
algebra and $\mathcal{K}$ is the algebra of compact operators on some complex separable
Hilbert space.
\begin{remark}
{\rm Let $\E$ be a non--unital, $\sigma$--unital $C^*$--algebra with $\RR(\E)=0$ and (PC). Let
$\{f_n\}$ be an approximate unit of $\E$ consisting of increased projections. Suppose
$\lim\limits_{n\to\infty}\tau_i(e_n)=\infty$, $i=1,\cdots,k$, for some approximate unit
$\{e_n\}$ of $\E$ consisting of projections. Then there $\{e_{n_j}\}\subset\{e_n\}$ such
that $\tau_i(e_{n_j})>j$, $j\ge 1$, $i=1,\cdots,k$. Since $\lim\limits_{s\to\infty}
\|f_se_{n_j}f_s-e_{n_j}\|=0$, $j\ge 1$, we can find projections $f_{s_j}\le f_s$ for $s$ large
enough such that $f_{s_j}\sim e_{n_j}$, $j\ge 1$. Then
$$
\tau_i(f_s)\ge\tau_i(f_{s_j})=\tau_i(e_{n_j})>j,\quad i=1,\cdots,k,
$$
so that $\lim\limits_{n\to\infty}\tau_i(f_n)=\infty$, $i=1,\cdots,k$.
}
\end{remark}

With symbols as above, we can extend $\tau_i$ to $M(\E)$ by $\tau_i(x)=\sup\limits_{n\ge 1}
\tau_i(f_nxf_n)$ for positive element $x\in M(\E)$ (cf. \cite[P324]{HR}), $i=1,\cdots, k$,
where $M(\E)$ is the multiplier algebra of $\E$.
\begin{lemma}\label{ly1}
Suppose that $\B$ is an essential ideal of $E$ and $\A,\,\B$ are simple. Then every positive element
in $E\backslash\B$ is full.
\end{lemma}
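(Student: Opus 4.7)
The plan is to take an arbitrary positive $a \in E\setminus\B$, let $I$ be the closed two-sided ideal of $E$ generated by $a$, and show that $I = E$ by combining simplicity of $\A$, simplicity of $\B$, and essentiality of $\B$ in $E$.

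First I would look at the image $\pi(a) \in \A$. Since $a \notin \B = \ker\pi$, we have $\pi(a) \neq 0$, and $\pi(a)$ is positive. Because $\A$ is simple, the closed two-sided ideal it generates is all of $\A$. Since $\pi$ is a surjective $*$-homomorphism, $\pi(I)$ is a closed two-sided ideal of $\A$ containing $\pi(a)$, hence $\pi(I) = \A$.

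Next I would argue that $\B \subseteq I$. The intersection $I \cap \B$ is a closed two-sided ideal of $\B$ (it is an ideal in $E$, hence in $\B$). Since $a \neq 0$, the ideal $I$ is nonzero, and $\B$ is essential in $E$, so $I \cap \B \neq 0$. Simplicity of $\B$ then forces $I \cap \B = \B$, i.e., $\B \subseteq I$. Finally, given any $x \in E$, we have $\pi(x) \in \A = \pi(I)$, so there is $y \in I$ with $\pi(x - y) = 0$, i.e., $x - y \in \B \subseteq I$; hence $x \in I$. Thus $I = E$, showing $a$ is full.

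This argument is essentially a two-line diagram chase once the three inputs (simplicity of $\A$, simplicity of $\B$, essentiality of $\B$) are in place; there is no real obstacle beyond remembering that the image of a closed ideal under a surjective $C^*$-homomorphism is again closed. The only mild subtlety is verifying that $I \cap \B \neq 0$, which is exactly where the essential-ideal hypothesis on $\B$ is used.
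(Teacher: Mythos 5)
Your proof is correct and follows essentially the same route as the paper: both use simplicity of $\A$ to get $\pi(I)=\A$, essentiality of $\B$ to put a nonzero element of $\B$ into $I$, and simplicity of $\B$ to conclude $\B\subseteq I$ (the paper phrases this as lifting $1_\A$ to $1_E+x\in I(a)$ and then showing $x\in I(a)$, which is the same argument organized around a single element rather than all of $\B$).
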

\begin{proof}Let $a\in E\backslash\B$ with $a\ge 0$ and let $I(a)$ be closed ideal generated by
$a$ in $E$. Since $\pi(I(a))$ is a nonzero closed ideal in $\A$ and $\A$ is simple, we get that $1_\A\in
\pi(I(a))$ and hence there is $x\in\B$ such that $1_E+x\in I(a)$. Since $\B$ is an essential ideal,
it follows that $a\B a\not=\{0\}$. Choose a nonzero element $b\in\overline{a\B a}\subset I(a)$.
Since $\B$ is simple, $x$ is in the closed ideal of $\B$ generated by $b$. Thus, $x\in I(a)$
and consequently, $1_E\in I(a)$.
\end{proof}

The following lemma slightly improves Lemma 2.1 of \cite{V}, whose proof is essentially
same as it in \cite[Lemma 3.2]{Xue1} and \cite[Lemma 2.1]{V}.
\begin{lemma}\label{ly2}
Suppose that $\RR(\B)=0$. Let $p,\,q$ be projections in $E$ and assume that there is $v\in\A$
such that $\pi(p)=v^*v$ and $vv^*\le\pi(q)$ in $\A$. Then there is a projection $e\in p\B p$
and a partial isometry $u\in E$ such that $p-e=u^*u$, $uu^*\le q$ and $\pi(u)=v$.
\end{lemma}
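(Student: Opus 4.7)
My plan is in three stages: lift $v$ to an element $w\in E$ adapted to $p$ and $q$, use the real-rank-zero hypothesis to cut off a projection $e\in p\B p$ below which $w^*w$ is invertible, and then form $u$ by continuous functional calculus.

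First I would choose any $\widetilde w\in E$ with $\pi(\widetilde w)=v$ and put $w:=q\widetilde wp$. The identities $v\pi(p)=vv^*v=v$ (from $v^*v=\pi(p)$) and $\pi(q)v=v$ (from $\pi(q)vv^*=vv^*$, which follows from $vv^*\le\pi(q)$) give $\pi(w)=v$. Setting $h:=w^*w\in pEp$ we have $\pi(h)=v^*v=\pi(p)$, so $h-p\in p\B p$, while $ww^*\in qEq$ with $\pi(ww^*)=vv^*$.

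The central step, and the main obstacle, is to produce a projection $e\in p\B p$ together with a constant $\delta>0$ such that $k:=(p-e)h(p-e)\ge\delta(p-e)$ in $(p-e)E(p-e)$. This is where $\RR(\B)=0$ enters essentially. Since real rank zero passes to hereditary subalgebras, $\RR(p\B p)=0$, so the hereditary subalgebra of $p\B p$ generated by the positive element $(p-h)_+$ admits an approximate unit of projections $\{e_n\}\subset p\B p$. Then $e_n(p-h)_+\to(p-h)_+$ forces $(p-e_n)(p-h)_+(p-e_n)\to 0$, and the inequality $(p-h)_+\ge p-h$ yields
$$(p-e_n)(p-h)(p-e_n)\le(p-e_n)(p-h)_+(p-e_n),$$
which is eventually less than $\tfrac12(p-e_n)$ in norm, giving $(p-e_n)h(p-e_n)\ge\tfrac12(p-e_n)$. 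So $e:=e_n$ (for $n$ large) with $\delta=1/2$ works. This controlled extraction of a projection from the real-rank-zero hypothesis is the technical heart of the argument, and it mirrors the key step in \cite[Lemma~3.2]{Xue1} and \cite[Lemma~2.1]{V}.

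Finally, with $e$ in hand, $k$ is invertible in $(p-e)E(p-e)$, so $k^{-1/2}$ is defined there by continuous functional calculus, and I would set $u:=w(p-e)k^{-1/2}\in E$. A direct computation gives
$$u^*u=k^{-1/2}(p-e)w^*w(p-e)k^{-1/2}=k^{-1/2}\,k\,k^{-1/2}=p-e,$$
so $u$ is a partial isometry; since $w=qwp$, the range projection $uu^*$ lies in $qEq$ and is therefore dominated by $q$. Since $\pi(e)=0$ and $\pi(k)=\pi(p)\pi(h)\pi(p)=\pi(p)$, one has $\pi(k^{-1/2})=\pi(p)$ inside the corner $\pi(p)\A\pi(p)$, and consequently $\pi(u)=v\cdot\pi(p)\cdot\pi(p)=v$, completing the verification.
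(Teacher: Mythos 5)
Your proof is correct and follows essentially the same route as the paper's: lift $v$ to $w=q\widetilde w p$, use $\RR(\B)=0$ to extract a projection $e\in p\B p$ making the compression $(p-e)w^*w(p-e)$ invertible in the corner $(p-e)E(p-e)$, and set $u=w\,k^{-1/2}$. The only (harmless) variation is in how $e$ is found — you take an approximate unit of projections for the hereditary subalgebra generated by $(p-h)_+$ and get the quantitative bound $k\ge\tfrac12(p-e)$, whereas the paper takes an approximate unit of projections for $p\B p$ itself and only needs $\|(p-e)-k\|<1$.
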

\begin{proof}Let $v\in\A$ such that $\pi(p)=v^*v,\ vv^*\le\pi(q)$. Choose $u_0\in E$ such that
$\pi(u_0)=v$ and set $w=qu_0p$. Then $\pi(w^{*}w)=\pi(p),\ \pi(w)=v$. Thus, $p-w^*w\in
p\B\,p$. Since $\RR(\B)=0$, $p\B p$ has an approximate unit consisting of projections. So
there is a projection $e\in p\B p$ such that
$$
\|(p-e)(p-w^*w)(p-e)\|=\|(p-e)-(p-e)w^{*}w(p-e)\|<1.
$$

Then $z=(p-e)w^{*}w(p-e)$ is invertible in $(p-e)E(p-e)$ and $\pi(z)=\pi(p)$.
Let $s=\big((p-e)w^{*}w(p-e)\big)^{-1}$, i.e., $zs=sz=p-e$. Then $\pi(s)=\pi(p)$.
Put $u=ws^{\frac{1}{2}}$. Then $uu^*=wsw^*\le q$, $\pi(u)=v$ and
\begin{align*}
u^{*}u=&s^{\frac{1}{2}}w^{*}ws^{\frac{1}{2}}=s^{\frac{1}{2}}(p-e)w^{*}w(p-e)s^{\frac{1}{2}}\\
=&(p-e)w^{*}w(p-e)s=p-e.
\end{align*}
\end{proof}

\begin{lemma}\label{ly3}
Suppose that $\A$ is unital simple purely infinite and $\B$ is an essential ideal of a
unital $C^{*}$--algebra $E$, moreover $\B$ is separable simple with $\RR(\B)=0$ and {\rm(PC)}.
Let $p,\,q$ be projections in $E\backslash\B$ and let $r$ be a nonzero projection in $p\B p$.
Then there is a projection $r'$ in $q\B q$ such that $[r]\le [r']$.
\end{lemma}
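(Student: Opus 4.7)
The plan is to reduce the conclusion to a trace inequality via condition~(PC): if a projection $r'\in q\B q$ can be found with $\tau_i(r')>\tau_i(r)$ for every $i=1,\dots,k$, then (PC)(2) applied to the projections $r,r'\in\B$ immediately gives $[r]\le[r']$. Since $r$ is a projection in $\B$, it lies in the Pedersen ideal of $\B$, so $\tau_i(r)<\infty$ for each $i$. The task therefore reduces to exhibiting projections in $q\B q$ of arbitrarily large $\tau_i$-trace.

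By Lemma~\ref{ly1}, $q$ is full in $E$; combined with $\B$ essential and $q\ne 0$, this makes $q\B q$ a nonzero hereditary subalgebra of $\B$, so $\RR(\B)=0$ supplies an increasing approximate unit $\{g_n\}$ of $q\B q$ consisting of projections. The central computation is $\tau_i(q)=\infty$ for each $i$, where $\tau_i$ is extended to $M(\B)^+$ by $\tau_i(x)=\sup_n\tau_i(f_nxf_n)$. Since $\A$ is a simple unital purely infinite $C^*$-algebra and $\pi(q)\ne 0$, there exists $v\in\A$ with $v^*v=1_\A$ and $vv^*\le\pi(q)$. Applying Lemma~\ref{ly2} with $p=1_E$ yields a projection $e\in\B$ and a partial isometry $u\in E$ satisfying $1_E-e=u^*u$, $uu^*\le q$, and $\pi(u)=v$. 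The tracial property of the extension then gives
\[
\tau_i(q)\ge\tau_i(uu^*)=\tau_i(u^*u)=\tau_i(1_E-e)=\tau_i(1_E)-\tau_i(e)=\infty,
\]
because $\tau_i(1_E)=\sup_n\tau_i(f_n)=\infty$ by (PC)(1) while $\tau_i(e)<\infty$ since $e\in\B$ is a projection.

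To pass from $\tau_i(q)=\infty$ to $\tau_i(g_n)\to\infty$, I verify that $g_n\to q$ strictly in $M(\B)$. Since $g_n\le q$ are projections, the difference $q-g_n$ is also a projection, so for $b\in\B$,
\[
\|g_nb-qb\|^2=\|(q-g_n)bb^*(q-g_n)\|;
\]
expanding this using $g_n=qg_nq$ and the fact that $qbb^*q\in q\B q$ is asymptotically fixed by $\{g_n\}$, the norm tends to zero. Lower semicontinuity of $\tau_i$ on $M(\B)^+$ with respect to strict convergence then yields $\lim\tau_i(g_n)\ge\tau_i(q)=\infty$. Choosing $n$ large enough that $\tau_i(g_n)>\tau_i(r)$ for all $i=1,\dots,k$ and setting $r'=g_n\in q\B q$ completes the argument.

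The main obstacle is producing a subprojection of $q$ of infinite $\tau_i$-trace: this needs both the pure infiniteness of $\A$ (to embed $1_\A$ below $\pi(q)$ by Murray--von Neumann equivalence) and Lemma~\ref{ly2} (to lift the resulting isometry to a partial isometry in $E$ modulo a projection $e\in\B$ of finite trace). The subsequent transfer of trace information from the multiplier projection $q$ to the genuine projections $g_n\in\B$ hinges on the strict convergence $g_n\to q$ in $M(\B)$, which rests on the projection identity $(q-g_n)^2=q-g_n$.
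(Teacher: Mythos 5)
Your proof is correct, and it shares the paper's overall skeleton: reduce the claim to the trace inequality supplied by (PC)(2), prove that the extended trace satisfies $\tau_i(q)=\infty$, and then pick a member of an increasing approximate unit of projections of $q\B q$ whose traces exceed those of $r$. The one step you do genuinely differently is the central computation $\tau_i(q)=\infty$. The paper gets it from Lemma \ref{ly1} alone: $q$ is full, so $1_E=\sum_j x_j^*qx_j$, whence $\infty=\tau_i(1_E)\le\sum_j\|x_j\|^2\,\tau_i(q)$. You instead use pure infiniteness of $\A$ to produce an isometry $v$ with $vv^*\le\pi(q)$ and lift it through Lemma \ref{ly2} (with $p=1_E$) to obtain $uu^*\le q$ and $u^*u=1_E-e$ with $e\in\B$ a projection of finite trace. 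Both arguments are valid, and both rest on the extension $\tau_i(x)=\sup_n\tau_i(f_nxf_n)$ being an additive tracial functional on $M(\B)^+$ (so that $\tau_i(x^*qx)\le\|x\|^2\tau_i(q)$, respectively $\tau_i(uu^*)=\tau_i(u^*u)$); the paper's version is slightly more economical in that it needs only simplicity of $\A$ and $\B$ here, not pure infiniteness, while yours spends an extra application of the lifting lemma. A point in your favour: the paper simply asserts $\sup_n\tau_i(g_n)=\tau_i(q)$ even though the two suprema involve different approximate units, whereas you justify $\liminf_n\tau_i(g_n)\ge\tau_i(q)$ via the strict convergence $g_n\to q$ and strict lower semicontinuity of the extended trace, which genuinely fills a gap. (Two minor remarks: invoking Lemma \ref{ly1} to see $q\B q\ne\{0\}$ is unnecessary, since essentiality of $\B$ alone gives this; and your explicit observation that $\tau_i(r)<\infty$ because projections lie in the Pedersen ideal makes precise a hypothesis the paper uses only tacitly.)
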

\begin{proof}
Since $\B$ has (PC), there are densely defined traces $\tau_1,\cdots,\tau_k$ on $\B$ and
an approximate unit $\{f_n\}$ of $\B$ consisting of increased projections such that
$\lim\limits_{n\to\infty}\tau_i(f_n)=\infty$, $i=1,\cdots,k$ and $\tau_i(e)<\tau_i(f)$,
$i=1,\cdots,k$ implies that $[e]\le[f]$ for any two projections $e,\,f$ in $\B$.

By Lemma \ref{ly1}, there are $x_1,\cdots,x_m \in\B$ such that
$\sum\limits^m_{i=1}x_i^*qx_i=1_E$. We regard $E$ as a $C^*$--subalgebra of $M(\B)$ for $\B$ is
essential. Thus,
$$
\infty=\tau_i(1_E)=\sum\limits^m_{j=1}\tau_i(x^*_jqx_j)\le\sum\limits^m_{j=1}\tau_i(\|x_j\|^2q),
$$
i.e., $\tau_i(q)=\infty$, $i=1,\cdots,k$. Let $r$ be a nonzero projection in $p\B p$.
Let $\{g_n\}$ be an approximate unit for $q\B q$ consisting of increased projections. Since
$\sup\limits_{n\ge 1}\tau_i(g_n)=\tau_i(q)=\infty$, $i=1,\cdots,k$, it follows that there is
$n_0$ such that $\tau_i(g_{n_0})>\tau_i(r)$, $i=1,\cdots,k$. Put $r'=g_{n_0}$. Then we get
$[r]\le[r']$.
\end{proof}

Now we can prove the main result of the section as follows:
\begin{theorem}
Suppose that $\A$ is unital simple purely infinite and $\B$ is an essential ideal of $E$,
moreover $\B$ is separable simple with $\RR(\B)=0$ and {\rm(PC)}. Then
$$K_0(E)=\{[p]\vert\,p\ \text{is a projection in}\ E\backslash\B\}.$$
\end{theorem}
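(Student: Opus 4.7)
The plan is to show every class $x\in K_0(E)$ is represented by a projection $p$ lying outside $\B$. Starting from $x=[P]-[Q]$ with $P,Q\in\M_n(E)$ projections, the substitution $P\mapsto P\oplus(1_n-Q)$ reduces matters to the case $x=[R]-n[1_E]$ for some projection $R\in\M_n(E)$. Because $\A$ is simple purely infinite, Cuntz's theorem represents $\pi_\ast(x)\in K_0(\A)$ by a single nonzero projection $c\in\A$, and proper infiniteness of $1_\A$ lets me arrange $c\le 1_\A$ with $c\ne 1_\A$. The goal then reduces to constructing a projection $p\in E$ satisfying $\pi(p)=c$ and $[p]=x$, so that $p\in E\setminus\B$ automatically.

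I would first lift $c$ to a projection $r_1\le 1_E$ in $E$: the hypothesis $\RR(\B)=0$ together with the approximate unit of projections supplied by (PC) yields the standard projection-lifting property through $\pi$, so such an $r_1$ exists, and the choice of $c$ places both $r_1$ and $1_E-r_1$ in $E\setminus\B$. Since $\pi_\ast[r_1]=\pi_\ast(x)$, exactness of the six-term sequence gives $z:=x-[r_1]\in\operatorname{im}(K_0(\B)\to K_0(E))$, and the approximate unit of projections in $\B$ lets me write $z=[e_1]-[e_2]$ with $e_1,e_2$ projections in some $\M_k(\B)$.

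The main step is to realize $[e_1]$ and $[e_2]$ by orthogonal sub-projections $e_1'\le r_1$ and $e_2'\le 1_E-r_1$ lying in $\B$. The compressions $r_1\B r_1$ and $(1_E-r_1)\B(1_E-r_1)$ inherit $\RR=0$ and carry each $\tau_i$ with infinite mass (by the argument used in the proof of Lemma~\ref{ly3}), so (PC) is available inside each corner. Lemma~\ref{ly3} produces projections $e_i''$ in the respective corners with $[e_i]\le[e_i'']$, and shrinking each $e_i''$ to a subprojection whose $\tau_i$-values match those of $e_i$, while invoking (PC) in both directions, yields $e_i'$ with $[e_i']=[e_i]$. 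The projection $p:=(r_1-e_2')+e_1'$ is then a sum of orthogonal projections in $E$, satisfies $\pi(p)=c$, and has class $[p]=[r_1]+[e_1]-[e_2]=x$.

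The principal obstacle is passing from the inequality of Lemma~\ref{ly3} to equality in $K_0$: (PC) supplies only the one-sided implication $\tau_i(p)<\tau_i(q)\Rightarrow[p]\le[q]$, so obtaining equality requires first reducing the matrix projections $e_i\in\M_k(\B)$ to Murray--von Neumann equivalent single projections in $\B$ (using the approximate unit of projections and $\RR(\B)=0$ to push $e_i$ into a corner $f_s\B f_s$), and then a squeezing argument that deploys (PC) for both $\le$ and $\ge$. Boundary cases such as $\pi_\ast(x)=0$ or $c=0$ are handled by absorbing an extra copy of $1_E$ in the initial reduction to stay in the generic regime where all projections under consideration lie in $E\setminus\B$.
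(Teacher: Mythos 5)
Your proposal establishes (at best) that the map $\{[p]:p\in E\setminus\B\}\to K_0(E)$ is surjective, but that is only half of the theorem. With $[p]$ denoting the Murray--von Neumann class, the equality $K_0(E)=\{[p]\mid p\in E\setminus\B\}$ is the Cuntz-type statement that these classes \emph{form} the group $K_0(E)$; in particular two projections in $E\setminus\B$ with the same $K_0$-class must be equivalent. The paper obtains the full statement by invoking Cuntz's Theorem~1.4, which requires verifying the axioms $(\Pi_1)$--$(\Pi_4)$ for the set $\p(E)$ of projections in $E\setminus\B$; essentially all of the work is in $(\Pi_3)$ (given $p,q\in\p(E)$, find $p'\sim p$ with $p'<q$ and $q-p'\in\p(E)$), which is proved by lifting a comparison in $\A$ through Lemma~\ref{ly2} and then filling in the $\B$-part of the defect via Lemma~\ref{ly3}. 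None of this appears in your argument, so the injectivity/group-structure half of the theorem is simply missing.

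Even the surjectivity half has a genuine gap at the step you yourself flag as the principal obstacle. Property (PC) gives only the one-sided implication $\tau_i(e)<\tau_i(f)\ \forall i\Rightarrow[e]\le[f]$; it says nothing when traces are equal, and $[e]\le[f]$ together with $[f]\le[e]$ does not yield $e\sim f$ without a cancellation hypothesis that is nowhere assumed. So the ``squeezing argument'' producing $e_i'$ with $[e_i']=[e_i]$ exactly does not follow from the stated hypotheses. (The realization can in fact be rescued using only the $\le$ direction: choose $e_2''\in r_1\B r_1$ with $e_2\sim g\le e_2''$, set $h=e_2''-g$, and then realize $[e_1]+[h]$ as an honest subprojection $k'$ of some projection in $(1_E-r_1)\B(1_E-r_1)$ furnished by Lemma~\ref{ly3}, so that $p=(r_1-e_2'')+k'$ has class $[r_1]+[e_1]-[e_2]$ --- but that absorption trick is not what you wrote.) Two further problems: your containments are inverted --- with $e_1'\le r_1$ and $e_2'\le 1_E-r_1$ the element $(r_1-e_2')+e_1'$ is not a projection; you need $e_2'\le r_1$ and $e_1'\perp r_1$ --- and the reduction of projections in $\M_k(\B)$ to equivalent projections in $\B$ uses trace comparison in matrix algebras, whereas axiom (2) of (PC) is stated only for projections in $\B$ itself.
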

\begin{proof}
Set $\p(E)=\{p\ \text{is a projection in}\ E\backslash\B\}$.
By \cite[Theroem 1.4]{Cu}, when $\p(E)$ satisfies following conditions:
\begin{enumerate}
\item[$(\Pi_1)$] If~$p,\ q\in\p(E)$ and $pq=0$, then $p+q\in\p(E);$

\item[$(\Pi_2)$] If $p\in\p(E)$ and $p'$ is a projection in $E$ such that $p\sim p'$, then
$p'\in\p(E);$

\item[$(\Pi_{3})$] For any $p,q\in\p(E)$, there is $p'$ such that $p'\sim p,\ p'<q$ and
$q-p'\in\p(E);$

\item[$(\Pi_{4})$] If $q$ is a projection in $E$ and there is $p\in\p(E)$ such that
$p\le q$, then $p\in\p(E)$,
\end{enumerate}
then $K_0(E)=\{[p]\vert\,p\in\p(E)\}$.  Therefore, we need only check that $\p(E)$ satisfies
above conditions.

Let $\p(\A)$ be the set of all nonzero projections in $\A$. By \cite[Proposition 1.5]{Cu},
$\p(\A)$ satisfies $(\Pi_1)\sim(\Pi_4)$. Clearly, $\p(E)$ satisfies $(\Pi_1)$, $(\Pi_2)$ and
$(\Pi_4)$. We now show that $\p(E)$ satisfies $(\Pi_3)$.

Let $p,\,q\in\p(E)$. Then there exists a projection $f\in\p(\A)$, such that $f\sim \pi(p)$,
$f<\pi(q)$ and $\pi(q)-f\in\p(\A)$, that is, there is a partial isometry $v\in \A$ such that
$f=v{v}^{*}<\pi(q)$ and $\pi(p)={v}^{*}v$. Thus, there are $u\in E$ and a projection
$r\in p\B p$ such that $p-r=u^*u$, $uu^*\le q$ and $\pi(u)=v$ by Lemma \ref{ly2}.
Note that $q-uu^*\not\in\B$ and $(q-uu^*)\B(q-uu^*)\not=\{0\}$ ($\B$ is an essential ideal).
Then by Lemma \ref{ly3}, there is $w_0\in\B$ such that $r=w_0^*w_0$, $w_0w_0^*\in
(q-uu^*)\B(q-uu^*)$. Put
$\hat u=u+w_0$. Then $p=\hat u^*\hat u$, $\hat u\hat u^*\le q$ and $\pi(q-\hat u\hat u^*)=
\pi(q)-f\not=0$, i.e., $q-\hat u\hat u^*\in\p(E)$.
\end{proof}

\section{$K_{1}$-group of the extension algebra}

Recall from \cite{Xue} that a unital $C^*$--algebra $\E$ has $1$--cancellation, if a projection
$p\in\M_2(\E)$ satisfies $\diag(p,1_k)\sim\diag(p_1,1_k)$ for some $k$, then $p\sim p_1$ in
$\M_2(\E)$, where $p_1=\diag(1,0)$. If $\E$ has no unit and $\E^+$ has $1$--cancellation, we
say $\E$ has $1$--cancellation. It is known that when $\B$ has $1$--cancellation, we have
following exact sequence of groups:
\begin{equation}\label{ex}
U(\B)\stackrel{j_*}{\longrightarrow}U(E)\stackrel{\pi_*}{\longrightarrow}
U(\A)\stackrel{\eta}{\longrightarrow} K_{0}(\B)
\end{equation}
(cf. \cite[lemma 2.2]{Xue}), where $j_*$ (resp. $\pi$) is the induced homomorphism of the
inclusion $j\colon\B\rightarrow E$ (resp. $\pi$) on $U(\B)$ (resp. $U(E)$),
$\eta=\partial_0\circ i_\A$ and $\partial_0\colon K_1(\A)\rightarrow K_0(\B)$ is the index map.

Since, in general, we have the exact sequence of groups
$$
U(\B)\stackrel{j_*}{\longrightarrow}U(E)\stackrel{\pi_*}{\longrightarrow}U(\A),
$$
(for $\pi(\U_0(E))=\U_0(\A)$), i.e., $U(\cdot)$ is a half--exact and homotopic invariant functor,
it follows from Proposition 21.4.1, Corollary 21.4.2 and Theorem 24.4.3 of \cite{Bl} that
the sequence of groups
\begin{equation}\label{exx}
U(S\A)\stackrel{\partial}{\longrightarrow}U(\B)\stackrel{j_*}{\longrightarrow}U(E)
\stackrel{\pi_*}{\longrightarrow}U(\A)
\end{equation}
is exact, where $\partial=e_*^{-1}\circ i_*$ and $e\colon\B\rightarrow C_\pi$ given by
$e(b)=(b,0)\in C_\pi$, $e_*$ is isomorphic and
$i\colon S\A\rightarrow C_\pi$ is defined by $i(g)=(0,g)$, here
$$
C_\pi=\{(x,f)\in E\oplus C_0([0,1),\A)\vert\,\pi(x)=f(0)\},\quad S\A=C_0((0,1),\A).
$$
We also have the exact sequence
\begin{equation}\label{eyy}
K_1(S\A)\stackrel{\partial}{\longrightarrow}K_1(\B)\stackrel{j_*}{\longrightarrow}K_1(E)
\stackrel{\pi_*}{\longrightarrow}K_1(\A).
\end{equation}
\begin{proposition}\label{mt1}
Suppose that $i_\A$, $i_\B$ are isomorphic and $i_{S\A}$ is surjective. Assume that $\B$ has
$1$--cancellation. Then $i_E$ is an isomorphism.
\end{proposition}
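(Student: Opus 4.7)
The plan is to apply the five lemma to the natural transformation from the $U$--sequence to the $K_{1}$--sequence, after extending (\ref{exx}) one step to the right using the extra term provided by (\ref{ex}) (which requires the $1$--cancellation hypothesis on $\B$).

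First, I would assemble the commutative ladder
\begin{equation*}
\begin{CD}
U(S\A) @>\partial>> U(\B) @>j_*>> U(E) @>\pi_*>> U(\A) @>\eta>> K_0(\B) \\
@Vi_{S\A}VV @Vi_\B VV @Vi_E VV @Vi_\A VV @| \\
K_1(S\A) @>\partial>> K_1(\B) @>j_*>> K_1(E) @>\pi_*>> K_1(\A) @>\partial_0>> K_0(\B).
\end{CD}
\end{equation*}
The top row is exact: the first four terms form (\ref{exx}), and exactness at $U(\A)$ is precisely (\ref{ex}), which is available because $\B$ has $1$--cancellation. The bottom row is the standard six--term $K$--theory exact sequence associated to $0\to\B\to E\to\A\to 0$.

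Next I would verify commutativity of each square. The two middle squares commute by naturality of $i_{(\cdot)}\colon U(\cdot)\to K_1(\cdot)$ applied to $j$ and $\pi$. The rightmost square commutes tautologically, since by the definition preceding (\ref{ex}) the map $\eta$ equals $\partial_0\circ i_\A$. The leftmost square, involving the connecting maps, is the only one requiring thought: since the top $\partial$ is defined as $e_*^{-1}\circ i_*$ via the mapping cone $C_\pi$ of $\pi$, and the bottom $\partial$ admits an analogous description in $K$--theory, commutativity follows from the naturality of $i_{(\cdot)}$ applied to the morphisms $e\colon\B\to C_\pi$ and $i\colon S\A\to C_\pi$; I expect this to be the one point that needs explicit verification, likely by citing the naturality of the mapping--cone exact sequence from \cite{Bl}.

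Finally, I would invoke the five lemma: with $i_{S\A}$ surjective (hypothesis), $i_\B$ and $i_\A$ isomorphisms (hypothesis), and the identity on $K_0(\B)$ an isomorphism (in particular injective), the middle vertical arrow $i_E$ must be an isomorphism. The main obstacle is simply the justification of the leftmost square; once naturality of $\partial$ is in hand, the rest is purely diagrammatic.
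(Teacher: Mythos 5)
Your proposal is correct and follows essentially the same route as the paper: the authors assemble exactly this commutative ladder from (\ref{ex}), (\ref{exx}) and (\ref{eyy}), verify the leftmost square via the mapping--cone diagram with $e_*$ and $i_*$ (the same naturality argument you single out as the key point), and conclude by the Five Lemma with the same placement of the surjectivity, isomorphism, and injectivity hypotheses.
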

\begin{proof}
Combining (\ref{ex}), (\ref{exx}) with (\ref{eyy}), we have following diagram
\begin{equation}\label{ezz}
\begin{array}{ccccccccc}
U(S\A)&\stackrel{\partial}\longrightarrow&U(\B)\stackrel{j_{*}}\longrightarrow&U(E)
\stackrel{\pi_{*}}\longrightarrow&U(\A)\stackrel{\eta}\longrightarrow&K_{0}(\B)\\
\downarrow i_{S\A}&&\downarrow i_{\B}&\downarrow i_{E}&\downarrow i_{\A}&\parallel \\
K_{1}(S\A)&\stackrel{\partial}\longrightarrow&K_{1}(\B)\stackrel{j_{*}}\longrightarrow&
K_{1}(E)\stackrel{\pi_{*}}\longrightarrow&K_{1}(\A)\stackrel{\partial_0}\longrightarrow&K_{0}(\B)\\
\end{array},
\end{equation}
in which two rows are exact and
$$
\eta=\partial_0\circ i_\A,\quad \pi_*\circ i_E=i_\A\circ\pi_*,\quad j_*\circ i_\B=i_E\circ j_*.
$$
Since $e_*$ is isomorphic, it follows from the commutative diagram
\[\begin{array}{ccccc}
U(S\A)&\stackrel{i_{*}}\longrightarrow&U(C_{\pi})\stackrel{e_{*}}\longleftarrow&U(\B)\\
\downarrow i_{S\A}&&\downarrow i_{C_{\pi}}&\downarrow i_{\B}\\
K_{1}(S\A)&\stackrel{i_{*}}\longrightarrow&K_{1}(C_{\pi})\stackrel{e_{*}}\longleftarrow&K_{1}(\B)\\
\end{array}
\]
that $\partial\circ i_{S\A}=i_\B\circ\partial$. Thus, (\ref{ezz}) is a commutative diagram.
Using the Five--Lemma to (\ref{ezz}), we can obtain the assertion.
\end{proof}

For a $C^*$--algebra $\E$, let $\csr(\E)$ and $\gsr(\E)$ be the connected stable rank and
general stable rank of $\E$, respectively, defined in \cite{R1}. We summrize some properties of
these stable ranks as follows:
\begin{lemma}\label{yl2}
Let $\E$ be a $C^*$--algebra. Then
\begin{enumerate}
\item $\gsr(\E)\le\csr(\E)$ {\rm(cf. \cite{R1});}
\item $\csr(\E)\le 2$ when $\E$ is a stable $C^*$--algebra {\rm(cf. \cite[Theorem 3.12]{Sh});}
\item $\E$ has $1$--cancellation if $\gsr(\E)\le 2$ {\rm(cf. \cite{Xue});}
\item if $\csr(\E)\le 2$ and $\gsr(C(\mathbf{S}^1,\E))\le 2$, then $i_\E$ is isomorphic
{\rm(cf. \cite[Theorem 2.9]{R2}} or \rm{\cite[Corollary 2.2]{Xue})}.
\end{enumerate}
\end{lemma}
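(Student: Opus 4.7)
The statement to prove is a compilation of four facts about the connected and general stable ranks, each already recorded in the literature. My plan is therefore not to invent new arguments but to assemble these results and, where useful, indicate the idea behind each reference rather than reproduce the full proofs.

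For (1), the inequality $\gsr(\E)\le\csr(\E)$ is Rieffel's basic comparison: if the last-row map $\U_n(\E)\to\mathrm{Lg}_n(\E)$ has connected fibres (which is what the condition $\csr(\E)\le n$ delivers), then every unimodular $n$-tuple can be completed to an invertible matrix, which is precisely the definition of $\gsr(\E)\le n$. One simply cites \cite{R1}. For (2), Sheu's result uses stability of $\E$ in the form $\E\cong\E\otimes\mathcal K$: any unitary in a matrix algebra over $\E^+$ can be absorbed into the ``infinite room'' provided by $\mathcal K$ and connected through it to the identity, yielding $\csr(\E)\le 2$. I would quote this directly from \cite[Theorem 3.12]{Sh}.

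For (3), the key point is that if $\gsr(\E)\le 2$ and $\diag(p,1_k)\sim\diag(p_1,1_k)$ in $\M_2(\E)$, the implementing partial isometry can be extended, via the completability of unimodular rows guaranteed by $\gsr\le 2$, to a unitary in $\M_2(\E^+)$ conjugating $p$ to $p_1$; so 1-cancellation follows. This is exactly the argument in \cite{Xue}. For (4), one uses the standard presentation of $K_1(\E)$ as the inductive limit of $\U_n(\E^+)/\U_0$: surjectivity of $i_\E$ comes from the fact that $\csr(\E)\le 2$ lets one reduce any unitary in $\M_n(\E^+)$ to one in $\M_1(\E^+)$, while injectivity is obtained by running the same reduction for the suspension picture, using $\gsr(C(\mathbf{S}^1,\E))\le 2$. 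This is \cite[Theorem 2.9]{R2} (or \cite[Corollary 2.2]{Xue}).

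Since each item is essentially a citation, there is no real mathematical obstacle; the only task is bookkeeping — keeping the definitions of $\gsr$, $\csr$, and 1-cancellation straight and verifying that the matrix sizes in the quoted statements match the hypotheses as I state them. The proof I would write is therefore a one-line reference list, with perhaps a sentence for each part pointing the reader to the precise statement being used. The substantive use of this lemma occurs later, where parts (2)--(4) combine to promote stability of $\B$ to isomorphism of $i_{C(X,E)}$; the lemma itself is just the toolbox.
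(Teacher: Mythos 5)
Your proposal matches the paper exactly: the lemma is stated there purely as a summary of known results, with no proof given beyond the embedded citations to Rieffel, Sheu, and Xue, which is precisely what you do. The brief sketches you add of the ideas behind each citation are accurate and harmless, so there is nothing to correct.
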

Now we present the main result of this section as follows:
\begin{theorem}\label{dl2}
Assume that $\A$ is a unital simple purely infinite $C^*$--algebra and $\B$ is a stable
$C^*$--algebra. Let $X$ be a compact Hausdorff space. Then $i_{C(X,E)}$ is an isomorphism.
\end{theorem}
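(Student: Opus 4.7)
My plan is to apply Proposition~\ref{mt1} to the induced short exact sequence
\[
0 \longrightarrow C(X,\B) \longrightarrow C(X,E) \longrightarrow C(X,\A) \longrightarrow 0,
\]
so that the task reduces to verifying four conditions: that $C(X,\B)$ has $1$--cancellation, that $i_{C(X,\B)}$ and $i_{C(X,\A)}$ are isomorphisms, and that $i_{SC(X,\A)}$ is surjective.

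The $\B$--side is handled entirely by stability. Since tensoring with a stable $C^{*}$--algebra preserves stability, both $C(X,\B)\cong C(X)\otimes\B$ and $C(\mathbf{S}^{1},C(X,\B))\cong C(\mathbf{S}^{1}\times X,\B)$ are stable. Lemma~\ref{yl2}(2) gives $\csr\le 2$ for each, and (1) then forces $\gsr\le 2$. Applying Lemma~\ref{yl2}(3) to $C(X,\B)$ and (4) to $C(X,\B)$ yields $1$--cancellation together with isomorphy of $i_{C(X,\B)}$.

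For the $\A$--side I would exploit that $\A$ simple purely infinite forces $1_{\A}$ to be properly infinite: pick isometries $s_{1},s_{2}\in\A$ with $s_{1}s_{1}^{*}+s_{2}s_{2}^{*}=1_{\A}$ and regard them as constants in $C(X,\A)$; they furnish a $*$--isomorphism $\M_{n}(C(X,\A))\cong C(X,\A)$ for every $n$. Cuntz's result gives $i_{\A}$ iso, and using the above isomorphism, any class in $K_{1}(C(X,\A))=\varinjlim U(\M_{n}(C(X,\A)))/\U_{0}$ represented in $\M_{n}(C(X,\A))$ can be transported to a unitary in $C(X,\A)$ itself; together with an analogous transport of stable homotopies this should give that $i_{C(X,\A)}$ is an isomorphism. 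For the suspension piece, Bott periodicity $K_{1}(SC(X,\A))\cong K_{0}(C(X,\A))$ combined with the standard representation of a projection class $[p]$ by the loop $t\mapsto e^{2\pi it}p+(1-p)$ should deliver surjectivity of $i_{SC(X,\A)}$.

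The main obstacle is precisely the $\A$--side: one must verify carefully that the Cuntz--style descent $\M_{n}(C(X,\A))\rightsquigarrow C(X,\A)$ is compatible with the maps $i_{(\cdot)}$ and with the direct--limit structure defining $K_{1}$, so that no matrix--size ambiguity survives under the five--lemma. Once the four hypotheses above are in hand, the diagram chase in the proof of Proposition~\ref{mt1} applies to the $C(X,-)$ extension verbatim and yields $i_{C(X,E)}$ an isomorphism, which is the desired conclusion.
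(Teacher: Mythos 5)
Your overall architecture is exactly the paper's: reduce to Proposition~\ref{mt1} applied to $0\to C(X,\B)\to C(X,E)\to C(X,\A)\to 0$, and settle the $\B$--side by stability of $C(Y,\B)$ together with Lemma~\ref{yl2}. That part is fine.

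The $\A$--side, however, rests on a false premise. In a unital simple purely infinite $C^*$--algebra the unit is properly infinite, which gives isometries $s_1,s_2$ with \emph{orthogonal} range projections, i.e.\ $s_1s_1^*+s_2s_2^*\le 1$; it does not give $s_1s_1^*+s_2s_2^*=1$. Equality would amount to a unital copy of $\mathcal O_2$ inside $\A$ and would force $[1_\A]=2[1_\A]$, i.e.\ $[1_\A]=0$ in $K_0(\A)$. For $\A=\mathcal O_3$ one has $K_0(\A)=\mathbb Z/2$ with $[1_\A]\ne 0$, so no such isometries exist, and indeed $\M_2(C(X,\mathcal O_3))\not\cong C(X,\mathcal O_3)$ as unital algebras since the $K_0$--classes of the units differ. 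Hence the claimed $*$--isomorphism $\M_n(C(X,\A))\cong C(X,\A)$, and the descent of $K_1$--classes built on it, is not available. What does survive is an embedding of $\M_{n+1}(\A)$ onto a corner $s\A s$ with $s=\sum_i v_iv_i^*\le 1$, and this is precisely what the paper uses: after invoking Zhang's lemma to replace a unitary $f$ with trivial $K_1$--class by $g+1-p$ with $g\in\U(C(X,p\A p))$ and $p$ a nontrivial projection, a homotopy in $\U(\M_{n+1}(C(X,\A)))$ is compressed by such isometries into $sC(X,\A)s$ and padded by $1-s$ so as to land back in $\U(C(X,\A))$, proving injectivity of $i_{C(X,\A)}$. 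Surjectivity of $i_{C(X,\A)}$ and of $i_{SC(X,\A)}$ is quoted from \cite[Corollary 3.1]{Xue} rather than derived from Bott periodicity; note that your loop $t\mapsto e^{2\pi it}p+(1-p)$ only gives a unitary in $(SC(X,\A))^+$ itself (rather than in a matrix algebra over it) if the $K_0$--class is represented by a projection in $C(X,\A)$, which again requires an argument. So the skeleton is right, but the step you flagged as the ``main obstacle'' is exactly where the proof is missing, and the repair is a corner--compression argument, not a matrix--algebra isomorphism.
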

\begin{proof}
If $\B$ is stable, then so is $C(Y,\B)$ for any compact Hausdorff space $Y$. Thus,
$\gsr(C(\mathbf{S}^1,C(X,\B)))\le 2$ and $\csr(C(X,\B))\le 2$ by Lemma \ref{yl2} (1) and (2).
So we get that $i_{C(X,\B)}$ is isomorphic by Lemma \ref{yl2} (4).

Since $\A$ is unital simple purely infinite, it follows from \cite[Corollary 3.1]{Xue} that $i_{C(X,\A)}$
and $i_{SC(X,\A)}$ are all surjective. Now we prove $i_{C(X,\A)}$ is injective by using some
methods appeared in \cite{RLL}.

Let $f\in\U(C(X,\A))$ with $i_{C(X,\A)}([f])=0$ in $K_1(C(X,\A))$. Let $p$ be a non--trivial
projection in $\A$. Then there exists $g\in\U(C(X,p\A p))$ such that $f$ is homotopic to $g+1-p$
by \cite[Lemma 2.7]{Zh}. Thus, there is a continuous
path $f_t\colon [0,1]\rightarrow\U(\M_{n+1}(C(X,\A)))$ such that $f_0=1_{n+1}$ and
$f_1=\diag(g+1-p,1_n)$ for some $n\ge 2$. Since $\M_{n+1}(\A)$ is purely infinite, we can find
a partial isometry $v=(v_{ij})\in\M_{n+1}(\A)$ such that $\diag(1-p,1_n)=v^*v$,
$vv^*\le\diag(1-p,0)$. Consequently, we get that
$$
v_{11}^*v_{11}=1-p,\ v^*_{1j}v_{1,j}=1,\ v^*_{1j}v_{1,i}=0,\ i\not=j,\
\sum^{n+1}_{i=1}v_{1i}v^*_{1i}\le 1-p.
$$
Set $v_1=p+v_{11}$, $v_i=v_{1i}$, $i=2,\cdots n+2$. Then $v_1,\cdots v_{n+1}$ are isometries in
$\A$ and $v_i^*v_j=0$, $i\not=j$, $s=\sum\limits^{n+1}_{i=1}v_iv_i^*$ is a projection. Put
$$
w_t(x)=(v_1,\cdots,v_{n+1})f_t(x)\begin{pmatrix}v_1^*\\ v_2^*\\ \vdots\\ v^*_{n+1}\end{pmatrix}
+1-s,\quad t\in[0,1],\ x\in X.
$$
It is easy to check that $w_t$ is a continuous path in $\U(\M_n(C(X,\A)))$ with $w_0=1$ and
$w_1=g+1-p$. Thus, $i_{C(X,\A)}$ is injective.

The final result follows from Proposition \ref{mt1}.
\end{proof}

Combining Theorem \ref{dl2} with standard argument in Algebraic Topology, we can get
\begin{corollary}
Let $\A$, $\B$ and $E$ be as in Theorem \ref{dl2}. Then
$$
\pi_n(\U(E))=\begin{cases} K_0(E)& \ n\
\text{odd}\\ K_1(E)&\ n\ \text{even}\end{cases}.
$$
\end{corollary}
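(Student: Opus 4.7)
The plan is to apply Theorem~\ref{dl2} with $X=\mathbf{S}^{n}$, obtaining
\[
U(C(\mathbf{S}^{n},E))\;\cong\;K_{1}(C(\mathbf{S}^{n},E)),
\]
and then to extract $\pi_{n}(\U(E))$ by splitting both sides compatibly with evaluation at a basepoint $x_{0}\in\mathbf{S}^{n}$.

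On the left, $\U(C(\mathbf{S}^{n},E))$ is naturally the topological group $\mathrm{Map}(\mathbf{S}^{n},\U(E))$; since $\U(E)$ is a topological group, the assignment $f\mapsto(f\cdot f(x_{0})^{-1},\,f(x_{0}))$ is a homeomorphism
\[
\mathrm{Map}(\mathbf{S}^{n},\U(E))\;\cong\;\mathrm{Map}_{*}(\mathbf{S}^{n},\U(E))\times\U(E),
\]
so passing to $\pi_{0}$ yields $U(C(\mathbf{S}^{n},E))\cong\pi_{n}(\U(E))\times U(E)$, with the $U(E)$ factor coming from evaluation at $x_{0}$ and its section by constants. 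On the right, the split short exact sequence
\[
0\longrightarrow C_{0}(\mathbf{S}^{n}\setminus\{x_{0}\},E)\longrightarrow C(\mathbf{S}^{n},E)\stackrel{\mathrm{ev}_{x_{0}}}{\longrightarrow}E\longrightarrow 0
\]
(split by the same constant inclusion $E\hookrightarrow C(\mathbf{S}^{n},E)$) gives $K_{1}(C(\mathbf{S}^{n},E))\cong K_{1}(C_{0}(\mathbb{R}^{n},E))\oplus K_{1}(E)$, and Bott periodicity identifies $K_{1}(C_{0}(\mathbb{R}^{n},E))$ with $K_{n+1}(E)$, equal to $K_{0}(E)$ for odd $n$ and $K_{1}(E)$ for even $n$.

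Since both splittings arise from the same pair $(\mathrm{ev}_{x_{0}},\text{constants})$ and $i_{\,\cdot\,}$ is natural under $*$--homomorphisms, the common $U(E)\cong K_{1}(E)$ summand (this identification being the $X=\{x_{0}\}$ case of Theorem~\ref{dl2}, i.e.\ $i_{E}$) cancels, giving $\pi_{n}(\U(E))\cong K_{n+1}(E)$, which is the claim. The step I expect to require the most care is precisely this last matching: one must verify that the product decomposition on the left and the direct--sum decomposition on the right are the kernel--section splittings induced by $(\mathrm{ev}_{x_{0}},\text{constants})$ in their respective categories; once that is in hand, naturality of $i_{\,\cdot\,}$ forces the summands to correspond and the conclusion is immediate.
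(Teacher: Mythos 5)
Your argument is correct and is precisely the ``standard argument in Algebraic Topology'' that the paper invokes without writing out: apply Theorem~\ref{dl2} to $X=\mathbf{S}^n$, split both $U(C(\mathbf{S}^n,E))$ and $K_1(C(\mathbf{S}^n,E))$ along the basepoint evaluation and the constant section, and use naturality of $i_{\,\cdot\,}$ together with Bott periodicity to identify $\pi_n(\U(E))$ with $K_{n+1}(E)$. Your closing remark correctly isolates the only delicate point (that the cancellation of the $K_1(E)$ summand must be justified by compatibility of the two splittings rather than by abstract cancellation), so nothing is missing.
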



\begin{thebibliography}{99}

\bibitem{Bl}Blackadar, B., $K$--theory for operator algebras, New York: Springer--verlag Press, 1986.

\bibitem{Cu}Cuntz, J., K--theory for certain $C^{*}$--algebras.
\textit{J. Ann. Math.}, {\bf 113}(1981),181--197.

\bibitem{BP}Brown, L.G. and Pedersen, G.K., $C^{*}$--algebras of real rank zero.
 \textit{J. Funct. Anal.}, {\bf 99}(1991), 131--149.
\bibitem{HR}Higson, H. and R\o rdam, M., The Weyl--Von Neumann theorem for multipliers of some
AF--algebras, \textit{Canadian J. Math.}, {\bf 43}(2) (1991),322--330.

\bibitem{LF}Liu, S and Fang, X., K--theory for extensions of purely infinite simple
$C^{*}$--algebras, \textit{Chinese Ann. of Math.}, {\bf 29A}(2)(2008), 195--202.

\bibitem{R1}Rieffel, M.A., {Dimensionl and stable rank in the $K$--theory of $C^*$--Algebras},
\textit{Proc. London Math. Soc.}, {\bf 46}(3) (1983), 301--333.

\bibitem{R2}Rieffel, M.A., {The homotopy groups of the unitary groups of non--commutative tori}, {\sl J. Operator
Theory}, {\bf 17} (1987), 237--254.

\bibitem{RLL}R\o rdam, M., Larsen, F. and Laustsen, N., An introduction to K-theory for
$C^{*}$--algebras, \textit{London Math. Soc. Student, Text} {\bf 49}, Cambridge University Press, 2000.

\bibitem{Sh}Sheu, A.J.L., A cancellation theorem for modules over the group $C^*$--algebras of
certain nipotent Lie groups, \textit{Canadian J. Math.}, {\bf 39}(1987), 365--427.

\bibitem{V}Visinescu, B., Topological structure of the unitary group of certain~$C^{*}$-algebras.
\textit{J. Operator Theory}, {\bf 60} (2008), 113--124.

\bibitem{Xue1}Xue, Y., The reduced minimum modulus in $C^*$--algebras, \textit{Integr. equ.
Oper. Theory}, {\bf 59} (2007), 269--280.

\bibitem{Xue}Xue, Y., The general stable rank in nonstable K--theory,
\textit{Rocky Mountain J. Math.}, {\bf 30}(2)(2000), 761--775.

\bibitem{Zh}Zhang, S., On the homotopy type of the unitary group and the Grassmann space of
purely infinite simple $C^*$--algebras, \textit{K-Theory}, {\bf 24}(2001), 203--225.


\end{thebibliography}
\end{document}